\newcommand{\C}{\mathbb{C}}
\newcommand{\QQ}{\mathbb{Q}}
\newcommand{\NN}{\mathbb{N}}
\newcommand{\OO}{\mathcal O}
\newcommand{\id}{\hbox{id}}
\newcommand{\gr}{\hbox{Gr}}
\newcommand{\wt}{\widetilde}
\newcommand{\ima}{\hbox{Im}}
\newtheorem{convention}{Conventions}
\newtheorem{nonumbering}{Theorem}
\begin{document}

\title{On a multiplicative version of Mumford's theorem
}


\author{Robert Laterveer 
}


\institute{CNRS - IRMA, Universit\'e de Strasbourg \at
              7 rue Ren\'e Descartes \\
              67084 Strasbourg cedex\\
              France\\
              \email{laterv@math.unistra.fr}           
           }

\date{Received: date / Accepted: date}

\maketitle

\begin{abstract} A theorem of Esnault, Srinivas and Viehweg asserts that if the Chow group of $0$--cycles of a smooth complete complex variety decomposes, then the top--degree coherent cohomology group decomposes similarly. In this note, we prove a similar statement for Chow groups of arbitrary codimension, provided the variety satisfies the Lefschetz standard conjecture.

\end{abstract}

\keywords{Algebraic cycles \and Chow groups \and Mumford's theorem \and Intersection product}
 \subclass{ 14C15 \and  14C25 \and  14C30}

\section{Introduction}

Since Mumford's famous 1969 paper \cite{M}, it is well--known that the Chow group of $0$--cycles $A^nX$ on a complex variety $X$ influences the cohomology group $H^n(X,\QQ)$:

\begin{nonumbering}[Mumford \cite{M}] Let $X$ be a smooth complete variety of dimension $n$ defined over $\C$. Suppose that $A^nX_{\QQ}$ is supported on a divisor. Then $H^n(X,\QQ)$ is supported on a divisor, in particular $H^n(X,\OO_X)=0$.
\end{nonumbering}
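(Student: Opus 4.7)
The plan is to exploit a Bloch--Srinivas style decomposition of the diagonal. Since $A^n X_{\QQ}$ is by assumption supported on a divisor $D \subset X$, restricting the diagonal $\Delta_X$ to the generic point $\eta$ of the second factor yields a zero-cycle on $X_{k(X)}$ that is rationally equivalent to a cycle supported on $D_{k(X)}$. Clearing denominators and spreading out this rational equivalence, one obtains a decomposition
\[
N\,\Delta_X \;=\; \Gamma_1 + \Gamma_2 \quad \text{in} \quad A^n(X \times X)_{\QQ},
\]
where $\Gamma_1$ is supported on $D \times X$ and $\Gamma_2$ on $X \times D'$ for some divisor $D' \subset X$.

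I would then let this identity act on $H^n(X,\QQ)$. Writing $\Gamma_1 = (\iota \times \id)_* \gamma_1$, with $\iota : \wt D \to X$ a desingularisation of $D$ composed with the inclusion, the projection formula shows that the image of $\Gamma_{1,*}$ is contained in $\iota_* H^{n-2}(\wt D, \QQ)$; symmetrically, $\Gamma_{2,*}$ lands in $\iota'_* H^{n-2}(\wt{D'},\QQ)$. Since $N \cdot \id = \Gamma_{1,*} + \Gamma_{2,*}$ on $H^n(X,\QQ)$, the whole group is supported on $D \cup D'$, proving the first assertion.

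The vanishing $H^n(X,\OO_X) = 0$ then follows from Hodge theory. The Gysin map from a smooth model of a divisor shifts bidegree by $(1,1)$, so its image in $H^n(X,\C)$ cannot meet $H^{0,n}(X)$: any preimage would have to live in bidegree $(-1,n-1)$, which is zero. I expect the technically demanding step to be the spreading out used to construct the diagonal decomposition; everything afterwards is a formal consequence of the projection formula and standard Hodge theory.
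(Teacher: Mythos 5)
The paper itself does not prove this statement: it is quoted as background (Mumford's theorem, \cite{M}), with the decomposition--of--the--diagonal proof attributed to Bloch \cite{B0}, \cite{B} and Bloch--Srinivas \cite{BS}. Your overall plan --- restrict the diagonal to the generic point, descend the rational equivalence, spread out, and let the resulting decomposition $N\Delta_X=\Gamma_1+\Gamma_2$ act on cohomology --- is exactly that standard proof, and it is the same machinery the paper adapts in proposition \ref{BS} and theorem \ref{main}. The spreading--out step you flag as delicate is carried out carefully in proposition \ref{BS} and poses no problem.

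The genuine gap is in how you let the two pieces act. The correspondence action $\Gamma_\ast(a)=p_{2\ast}\bigl(p_1^\ast a\cdot\Gamma\bigr)$ is \emph{not} symmetric in the two factors, so $\Gamma_1$ (supported on $D\times X$) and $\Gamma_2$ (supported on $X\times D'$) are neutralized by two different mechanisms, and your claim that $\ima(\Gamma_{1,\ast})\subset\iota_\ast H^{n-2}(\wt D,\QQ)$ is false. Writing $\Gamma_1=(\iota\times\id)_\ast\gamma_1$, the projection formula gives $\Gamma_{1,\ast}=\gamma_{1,\ast}\circ\iota^\ast$ (this is the paper's lemma \ref{comp2}): the action factors through the \emph{restriction} $H^n(X,\QQ)\to H^n(\wt D,\QQ)$ followed by an arbitrary correspondence $\gamma_1\in A^{n-1}(\wt D\times X)$, whose support may dominate the second factor; nothing forces its image to be a Gysin image. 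Only $\Gamma_2$ is controlled by a Gysin map, so your bidegree argument applies to that term alone. For the conclusion $H^n(X,\OO_X)=0$ the repair is easy and is exactly what the paper does for the analogous term in the proof of theorem \ref{main}: $\Gamma_{1,\ast}$ acting on $H^{0,n}$ factors through $\gr^0_FH^n(\wt D,\C)=H^n(\wt D,\OO_{\wt D})$, which vanishes because $\dim\wt D=n-1<n$. For the stronger assertion that $H^n(X,\QQ)$ is supported on a divisor, the $\Gamma_1$ term requires a further idea you have omitted: by hard Lefschetz on $\wt D$ one has $H^n(\wt D,\QQ)=h\cup H^{n-2}(\wt D,\QQ)$, so $\gamma_{1,\ast}H^n(\wt D,\QQ)=(\gamma_1\circ h)_\ast H^{n-2}(\wt D,\QQ)$, where the composite correspondence has support of dimension $n-1$ in $\wt D\times X$ and hence image supported on a divisor of $X$. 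Without some such argument the first assertion is not established.
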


In the 1992 paper \cite{ESV}, Esnault, Srinivas and Viehweg study the multiplicative behaviour of the Chow ring $A^\ast X$ versus the multiplicative behaviour of various cohomology rings associated to $X$. We now state the part of their result that is relevant to us. For a given partition 
   $n=n_1+\cdots+n_r$
   (with $n_i\in\NN_{>0}$),
let us consider the following properties:

\vskip0.5cm
\noindent
(P1) There exists a Zariski open $V\subset X$, such that intersection product induces a surjection
  \[   A^{n_1}V_{\QQ}\otimes A^{n_2}V_{\QQ}\otimes\cdots\otimes A^{n_r}V_{\QQ}\ \to\ A^nV_{\QQ}\ ;\]

  \noindent
  (P2) There exists a Zariski open $V\subset X$, such that cup product induces a surjection
  \[ H^{n_1}(V,\QQ)\otimes H^{n_2}(V,\QQ)\otimes\cdots\otimes H^{n_r}(V,\QQ)\ \to\ H^n(V,\QQ)/N^1\ \]
  (here $N^\ast$ denotes the coniveau filtration);
  
  \noindent
  (P3) Cup product induces a surjection
    \[ H^{n_1}(X,\OO_X)\otimes H^{n_2}(X,\OO_X)\otimes\cdots\otimes H^{n_r}(X,\OO_X)\ \to\ H^n(X,\OO_X)\ .\]

\vskip0.5cm

In these terms, what Esnault, Srinivas and Viehweg prove is the following:

\begin{nonumbering}[Esnault--Srinivas--Viehweg \cite{ESV}] Let $X$ be a smooth complete variety of dimension $n$ over $\C$. Then (P1) implies (P3), and (P2) implies (P3).
\end{nonumbering}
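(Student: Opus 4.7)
The plan is to handle the two implications by quite different methods. For $(P1)\Rightarrow(P3)$ I would run a Bloch--Srinivas style spreading argument to produce a decomposition of the diagonal. Let $\eta$ be the generic point of $X$, viewed as a $k(X)$-valued point of $V$, and apply hypothesis $(P1)$, base-changed to $k(X)$, to the class $[\eta]$ in $A^nV_{k(X)}\otimes\QQ$: this gives
\[
[\eta] = \sum_i \alpha_{i,1}\cdot\alpha_{i,2}\cdots\alpha_{i,r}, \qquad \alpha_{i,j}\in A^{n_j}V_{k(X)}\otimes\QQ.
\]
Spreading this out and extending via the localization sequences along both factors of $X\times X$ yields an identity in $A^n(X\times X)_{\QQ}$,
\[
[\Delta_X] = \sum_i \bar A_{i,1}\cdot\bar A_{i,2}\cdots\bar A_{i,r} + \Gamma_1 + \Gamma_2,
\]
where $\bar A_{i,j}\in A^{n_j}(X\times X)_{\QQ}$, while $\Gamma_1$ is supported on $W\times X$ for some proper closed $W\subsetneq X$ and $\Gamma_2$ is supported on $X\times(X\setminus V)$.

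Next I would pass to cohomology classes and let both sides act as correspondences on $H^n(X,\OO_X)=H^{0,n}(X)$. The left-hand side acts as the identity. The contributions from $\Gamma_1$ and $\Gamma_2$ factor through coherent cohomology of (resolutions of) $W$ and $X\setminus V$ respectively; since both have dimension less than $n$, Grothendieck vanishing forces them to vanish on $H^n(X,\OO_X)$. For each remaining term I would unpack the correspondence action via K\"unneth and Hodge type: each class $[\bar A_{i,j}]$ lies in $H^{n_j,n_j}(X\times X)$, and the requirement that $(\bar A_{i,1}\cdots\bar A_{i,r})_*$ send $H^{0,n}(X)$ into itself, combined with $\sum_j n_j=n$, forces the only contributing K\"unneth components to take the form $a_{i,j}\otimes b_{i,j}$ with $a_{i,j}\in H^0(X,\Omega_X^{n_j})$ and $b_{i,j}\in H^{n_j}(X,\OO_X)$. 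Hence each $(\bar A_{i,1}\cdots\bar A_{i,r})_*\xi$ is a linear combination of cup products $b_{i,1}\cup\cdots\cup b_{i,r}$, which belong to the image of cup product on $H^\ast(X,\OO_X)$; summing over $i$ yields $(P3)$.

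For $(P2)\Rightarrow(P3)$ I would argue directly via Deligne's theory of mixed Hodge structures, with no spreading. The pullback $j^*\colon H^n(X,\QQ)\to H^n(V,\QQ)$ along $j\colon V\hookrightarrow X$ is a morphism of mixed Hodge structures whose image is the pure weight-$n$ piece $W_nH^n(V)\cong H^n(X,\QQ)/N^1H^n(X)$, and cup product is compatible with $j^*$. Since the weight-$n$ part of $\bigotimes_j H^{n_j}(V,\QQ)$ equals $\bigotimes_j W_{n_j}H^{n_j}(V)=\bigotimes_j j^*H^{n_j}(X,\QQ)$, strictness with respect to $W$ converts $(P2)$ into
\[
j^*(C_X) + \bigl(W_nH^n(V)\cap N^1H^n(V)\bigr) = W_nH^n(V),
\]
where $C_X$ denotes the image of cup product in $H^n(X,\QQ)$. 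The semi-purity inclusion $N^1\subset F^1$ combined with strictness of $j^*$ with respect to $F^\bullet$ forces $W_nH^n(V)\cap N^1H^n(V)\subset j^*(F^1H^n(X))$, whence, after extending scalars to $\C$, the image of cup product together with $F^1H^n(X)$ spans $H^n(X,\C)$. Passing to the quotient $H^n(X,\C)/F^1=H^n(X,\OO_X)$ gives $(P3)$.

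The principal delicacy of the first implication is the spreading-and-extension step: intersection products of the spread cycles on $U\times V$ must be compared, modulo cycles supported in the complement, with intersection products of their closures in $X\times X$. In the second implication, the main subtlety lies in invoking the correct strictness statements for morphisms of mixed Hodge structures on smooth open varieties, alongside the semi-purity inclusion $N^1\subset F^1$.
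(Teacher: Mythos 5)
The paper does not reprove this theorem: it quotes it from \cite{ESV}, and only the (P1)$\Rightarrow$(P3) half is reproduced in substance, as the $j=n$ case of theorem \ref{main} (where the hypothesis $B(X)$ is vacuous because $\pi_n$ is not needed in factored form). Your first half follows exactly that route: spread the hypothesis over $k(X)$ using that $\C$ is a universal domain (proposition \ref{BS}), decompose the diagonal as a sum of products of lower--codimension correspondences plus two degenerate terms, let everything act on $H^n(X,\OO_X)$, and retain only the component of the product correspondence lying in $\gr_F^nH^nX\otimes\gr_F^0H^nX$ (lemma \ref{esv}). One justification is wrong, however: the term $\Gamma_2$ supported on $X\times(X\setminus V)$ does \emph{not} die by Grothendieck vanishing. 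Its action on $H^n(X,\OO_X)$ factors through $\gr_F^{-1}H^{n-2}(\wt{D},\C)$ for $\wt{D}$ a resolution of the divisor $D=X\setminus V$ (equivalently, its image is supported on $D$, hence lies in $N^1H^n(X)\subset F^1H^n(X)$ and vanishes in $\gr_F^0$); the coherent group $H^{n-2}(\wt{D},\OO_{\wt{D}})$ is in general nonzero, since $\dim\wt{D}=n-1\ge n-2$. Grothendieck vanishing is the correct reason only for $\Gamma_1$, supported on $W\times X$ with $\dim W<n$, whose action factors through $H^n(\wt{W},\OO_{\wt{W}})=0$. This is a one--line repair, but as written the step is misjustified.

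The (P2)$\Rightarrow$(P3) half has no counterpart in this paper, and your mixed--Hodge argument is essentially sound, with one misstatement: $W_nH^n(V)$ is $H^n(X,\QQ)/\ker j^*$, and $\ker j^*$ (the classes supported on $X\setminus V$) is contained in, but in general strictly smaller than, $N^1H^n(X)$; the claimed isomorphism $W_nH^n(V)\cong H^n(X,\QQ)/N^1H^n(X)$ need not hold. Fortunately you only use $W_nH^n(V)=j^*H^n(X,\QQ)$, strictness of $j^*$ for $W$ and $F$, and the inclusions $\ker j^*\subset N^1H^n(X)\subset F^1H^n(X,\C)$, so the argument survives. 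You should also make the final step explicit: the image of $\bigotimes_i H^{n_i}(X,\C)$ under cup product followed by projection to $\gr_F^0H^n(X,\C)=H^n(X,\OO_X)$ is contained in the image of $\bigotimes_i H^{n_i}(X,\OO_X)$ because the Hodge filtration is multiplicative, so any tensor factor lying in $F^1$ pushes the product into $F^1H^n(X,\C)$. With these two repairs both implications are complete.
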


The implication from (P1) to (P3) is a kind of multiplicative variant of Mumford's theorem, and the proof in \cite{ESV} is motivated by Bloch's proof of Mumford's theorem using a ``decomposition of the diagonal'' argument (\cite{B}, \cite{B0}, cf. also \cite{BS}). As noted in \cite[remark 2]{ESV}, the generalized Hodge conjecture would imply that (P2) and (P3) are equivalent.\footnote{It is somewhat frustrating that it is not known unconditionally whether (P1) implies (P2), i.e. without assuming the generalized Hodge conjecture. Apparently Esnault, Srinivas and Viehweg had claimed to prove this in an earlier version of their paper, but the argument was found to be incomplete \cite[remark 2]{ESV}.}

In this note, we show that the Esnault--Srinivas--Viehweg theorem can be extended from $0$--cycles to arbitrary Chow groups. This is possible provided the variety $X$ satisfies the Lefschetz standard conjecture $B(X)$ (this is analogous to \cite{yet}, where I extended Mumford's theorem from $0$--cycles to arbitrary Chow groups, provided $B(X)$ holds):

\begin{nonumbering}[(=theorem \ref{main})] Let $X$ be a smooth projective variety of dimension $n$ over $\C$ that satisfies $B(X)$. Suppose there exists a Zariski open $V\subset X$, and $j=j_1+\cdots+ j_r$ with $j_i\in\NN_{>0}$ such that intersection product induces a surjection
  \[  A^{j_1}V_{\QQ}\otimes A^{j_2}V_{\QQ}\otimes\cdots\otimes A^{j_r}V_{\QQ}\ \to\ A^jV_{\QQ}\ .\]
Then cup product induces a surjection
  \[ H^{j_1}(X,\OO_X)\otimes H^{j_2}(X,\OO_X)\otimes\cdots\otimes H^{j_r}(X,\OO_X)\ \to\ H^j(X,\OO_X)\ .\]
\end{nonumbering}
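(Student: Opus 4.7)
The plan is to adapt the Esnault--Srinivas--Viehweg proof of $(P1) \Rightarrow (P3)$, which treats the Mumford case $j=n$, to arbitrary codimension. As in the non-multiplicative extension \cite{yet}, the role of the Lefschetz standard conjecture $B(X)$ is to give access to a Chow-level representative of the K\"unneth projector onto $H^j(X)$. The overall strategy is: (i) algebraicize the projector, (ii) use a Bloch--Srinivas spread of the multiplicative hypothesis to decompose it, and (iii) evaluate on $H^j(X, \OO_X)$, using Hodge-bidegree vanishing to kill the boundary terms.

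By Kleiman's theorem, $B(X)$ implies that the K\"unneth component $\pi_j \in H^{2n}(X \times X, \QQ)$ of the diagonal is algebraic; fix a Chow-level representative $\widetilde{\pi}_j \in A^n(X \times X)_\QQ$. As a correspondence, $\widetilde{\pi}_j$ induces the identity on $H^j(X, \OO_X)$ and zero on $H^i(X, \OO_X)$ for $i \neq j$. Next, I would encode the multiplicative hypothesis via the small diagonal correspondence $\delta \in A^{rn}(X^r \times X)$, which realizes cup product as a correspondence $X^r \vdash X$. Spreading a decomposition $\gamma = \sum_k \beta_1^{(k)} \cdots \beta_r^{(k)}$ (with $\beta_i^{(k)} \in A^{j_i}(V)_\QQ$) over a Chow/Hilbert parameter space of codimension-$j$ cycles on $X$ and taking closures from $V^{r+1}$ to $X^{r+1}$, one should arrive at a decomposition in $A^n(X \times X)_\QQ$
$$\widetilde{\pi}_j = \Gamma_{\text{mult}} + \Gamma_T,$$
where $\Gamma_{\text{mult}}$ factors through external-product correspondences of codimensions $j_1, \ldots, j_r$ (and so implements cup products of classes in those degrees) and $\Gamma_T$ is supported on $(X \times T) \cup (T \times X)$ with $T = X \setminus V$.

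Evaluating this identity on $\alpha \in H^j(X, \OO_X)$, the left-hand side gives $\alpha$. The component of $\Gamma_T$ supported on $X \times T$ factors through the Gysin map $(i_T)_*$, which strictly raises Hodge bidegree, so its image in $H^{0,j}$ is zero. For the component supported on $T \times X$, one uses that this factors through restriction to a desingularization of $T$, combined with the K\"unneth-filter structure of $\widetilde{\pi}_j$ afforded by $B(X)$: after projecting to the $j$-th cohomology, any class pulled back from $T$ must lie in the image of the cohomology of the desingularization, and a Hodge/weight argument then forces the $(0,j)$-part to vanish. What remains is $\Gamma_{\text{mult}} \cdot \alpha$, which by construction lies in the cup-product image $H^{j_1}(X, \OO_X) \otimes \cdots \otimes H^{j_r}(X, \OO_X) \to H^j(X, \OO_X)$.

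The main technical obstacles are in step two: parameterizing codimension-$j$ cycles on $X$ in a way that carries the multiplicative relation through closure and yields a decomposition with a genuine ``multiplicative'' part, and then verifying that the $T \times X$ boundary correction really acts trivially on $H^j(X, \OO_X)$. In the Mumford case $j=n$ the latter is automatic from $\dim T < n$, but for intermediate $j$ the Lefschetz conjecture is used essentially: without the ability to replace the full diagonal by its $j$-th K\"unneth component, the boundary term would potentially see nonzero coherent cohomology of $T$ in degree $j$.
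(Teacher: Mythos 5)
Your overall architecture matches the paper's (algebraicize the K\"unneth projector $\pi_j$, decompose it via a Bloch--Srinivas spread of the multiplicative hypothesis, kill the boundary terms by Hodge bidegree, identify the main term with a cup product), but there is a genuine gap at the decomposition step, and it is exactly the point where the paper introduces its one new ingredient. You propose to decompose a representative $\widetilde\pi_j\in A^n(X\times X)$. The Bloch--Srinivas mechanism, however, works by restricting the cycle to $\{\eta\}\times X$ for $\eta$ the generic point of the first factor and invoking the hypothesis there; for a cycle of codimension $n$ this restriction is a $0$--cycle on $X_{k(X)}$, so the hypothesis on $A^jV$ with $j<n$ never gets applied. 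No amount of spreading over Chow or Hilbert schemes fixes this mismatch of codimensions: the object being decomposed must live in $A^j(X\times X)$. The paper's lemma \ref{kunneth} supplies precisely this: using $B(X)$ a second time (to algebraicize the inverse Lefschetz isomorphism $(L^{n-j})^{-1}$), it writes $\pi_j=(\tau\times\id)_\ast(\tau\times\id)^\ast(P_j)$ with $P_j\in A^j(X\times X)$ and $\tau\colon Y_j\hookrightarrow X$ a $j$--dimensional complete intersection; proposition \ref{BS} is then applied to $P_j$, not to a codimension--$n$ representative of $\pi_j$.

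The same missing lemma is what closes your second flagged obstacle. As you correctly suspect, the term of the decomposition supported on $T\times X$ (with $T$ a divisor) acts on $H^j(X,\OO_X)$ through $H^j(\widetilde T,\OO_{\widetilde T})$, which does not vanish for $j<n$; your appeal to ``the K\"unneth-filter structure of $\widetilde\pi_j$'' and ``a Hodge/weight argument'' is not an argument. In the paper, because the whole decomposition is pushed and pulled through $Y_j\times X$, the offending term becomes supported on $Z\times X$ with $Z=D\cap Y_j$ of dimension $j-1$, and then $\gr_F^0H^j(\widetilde Z)=H^j(\widetilde Z,\OO_{\widetilde Z})=0$ for dimension reasons. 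So both of the steps you label ``technical obstacles'' are genuine gaps that are repaired by the single device of lemma \ref{kunneth}; the rest of your outline (the $X\times T$ term via the Gysin bidegree shift, and the identification of the multiplicative term with the image of cup product, which is lemma \ref{esv}) is consistent with the paper's proof.
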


The proof of this theorem, which is very similar to the proof given by Esnault--Srinivas--Viehweg in \cite{ESV}, is an exercise in using the meccano of correspondences and the Bloch--Srinivas formalism.

It seems natural to wonder whether the converse to theorem \ref{main} might perhaps be true (this would be a multiplicative variant of Bloch's conjecture). In \cite{moi}, I prove this converse implication in some special cases for $0$--cycles (i.e. $j=n$); the converse implication for $j\not=n$ appears to be more difficult.

\begin{convention} In this note, the word {\sl variety\/} will refer to a quasi--projective irreducible algebraic variety over $\C$, endowed with the Zariski topology. A {\sl subvariety\/} is a (possibly reducible) reduced subscheme which is equidimensional. The Chow group of $j$--dimensional algebraic cycles on $X$ with $\QQ$--coefficients modulo rational equivalence is denoted $A_jX$; for $X$ smooth of dimension $n$ the notations $A_jX$ and $A^{n-j}X$ will be used interchangeably. Caveat: note that what we denote $A^jX$ is elsewhere often denoted $CH^j(X)_{\QQ}$.
In an effort to lighten notation, we will often write $H^jX$ or $H_jX$ to indicate singular cohomology $H^j(X,\QQ)$ resp. Borel--Moore homology $H_j(X,\QQ)$.

For basics concerning algebraic cycles and their functorial behaviour, the curious reader is invited to consult \cite{F}. For the formalism of correspondences, cf. \cite{Sch}, \cite{MNP}.
\end{convention}

\section{Preliminary}

Let $X$ be a smooth projective variety of dimension $n$, and $h\in H^2(X,\QQ)$ the class of an ample line bundle. The hard Lefschetz theorem asserts that the map
  \[  L^{n-i}\colon H^i(X,\QQ)\to H^{2n-i}(X,\QQ)\]
  obtained by cupping with $h^{n-i}$ is an isomorphism, for any $i< n$. One of the standard conjectures asserts that the inverse isomorphism is algebraic.

\begin{definition}[Lefschetz standard conjecture]{} Given a variety $X$, we say that $B(X)$ holds if for all ample $h$, and all $i<n$ the isomorphism 
  \[  (L^{n-i})^{-1}\colon 
  H^{2n-i}(X,\QQ)\stackrel{\cong}{\rightarrow} H^i(X,\QQ)\]
  is induced by a correspondence.
 \end{definition}  
 
 \begin{remark} It is known that $B(X)$ holds for the following varieties: curves, surfaces, abelian varieties \cite{K0}, \cite{K}, threefolds not of general type \cite{Tan}, varieties motivated by a surface in the sense of Arapura \cite{A2} (this includes the Hilbert schemes of $0$--dimensional subschemes of surfaces \cite[Corollary 7.5]{A2}), 
 $n$--dimensional varieties $X$ which have $A_i(X)_{}$ supported on a subvariety of dimension $i+2$ for all $i\le{n-3\over 2}$ \cite[Theorem 7.1]{V}, $n$--dimensional varieties $X$ which have $H_i(X)=N^{\llcorner {i\over 2}\lrcorner}H_i(X)$ for all $i>n$ \cite[Theorem 4.2]{V2}, products and hyperplane sections of any of these \cite{K0}, \cite{K}.
 \end{remark}

It is known that $B(X)$ implies that the K\"unneth components
  \[  \pi_j\in H^{2n-j}(X)\otimes H^j(X)\subset H^{2n}(X\times X)\]
  of the diagonal $\Delta\subset X\times X$ are algebraic \cite{K0}, \cite{K}. Moreover, these K\"unneth components satisfy the following property:
  
  \begin{lemma}\label{kunneth} Let $X$ be a smooth projective variety satisfying $B(X)$, and let $h\in H^2(X)$ be the class of an ample line bundle.
  For any $j\le n$, there exists a cycle $P_j\in A^j(X\times X)$ such that
    \[   \pi_j=(\tau\times\id)_\ast(\tau\times\id)^\ast(P_j)\ \ \in H^{2n}(X\times X)\ ,\]
    where $\tau\colon Y_j\to X$ denotes the inclusion of a dimension $j$ complete intersection of class $[Y_j]=h^{n-j}$.
  \end{lemma}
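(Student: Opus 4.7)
The plan is to construct $P_j$ by algebraically ``inverting'' the hard Lefschetz operator $L^{n-j}$ applied to the first Künneth factor of $\pi_j$. First I reduce the desired equality to a cup-product identity: by the projection formula, combined with $(\tau\times\id)_\ast 1_{Y_j\times X} = [Y_j]\times [X] = p_1^\ast(h^{n-j})$, one has
\[ (\tau\times\id)_\ast(\tau\times\id)^\ast P_j \;=\; p_1^\ast(h^{n-j})\cdot P_j \]
in $H^{2n}(X\times X)$ for any $P_j \in A^j(X\times X)$. So it suffices to produce an algebraic cycle $P_j\in A^j(X\times X)$ whose cohomology class satisfies $p_1^\ast(h^{n-j})\cdot [P_j] = [\pi_j]$.

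Next, hard Lefschetz provides a unique cohomological solution. The class $[\pi_j]$ lies in $H^{2n-j}(X)\otimes H^j(X)$, and cup product with $p_1^\ast(h^{n-j})$ coincides with the operator $L^{n-j}\otimes\id$, which by hard Lefschetz is an isomorphism $H^j(X)\otimes H^j(X) \xrightarrow{\sim} H^{2n-j}(X)\otimes H^j(X)$. Hence there is a unique class $Q \in H^j(X)\otimes H^j(X)$ with $p_1^\ast(h^{n-j})\cdot Q = [\pi_j]$, obtained by applying $(L^{n-j})^{-1}$ to the first Künneth factor.

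It remains to realize $Q$ by an algebraic cycle, which is where $B(X)$ enters. Under $B(X)$, both $\pi_j$ and an algebraic correspondence $\Lambda \in A^j(X\times X)$ inducing $(L^{n-j})^{-1}$ on $H^{2n-j}(X)$ are available, and the cohomology class of $\Lambda$ automatically lies in $H^j(X)\otimes H^j(X)$. I will take $P_j := \pi_j \circ {}^t\Lambda \in A^j(X\times X)$ (composition of correspondences, which lowers codimension by $n$). A direct Künneth computation using the standard formula $\beta\circ\alpha = (p_{13})_\ast(p_{12}^\ast\alpha\cdot p_{23}^\ast\beta)$ shows that if $[\pi_j] = \sum_i \alpha_i\otimes\beta_i$, then
\[ [P_j] \;=\; \sum_i (L^{n-j})^{-1}(\alpha_i)\otimes\beta_i \;=\; Q, \]
completing the proof. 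The main obstacle is this last bookkeeping step: one must identify the correct composition (pre-composition with the transpose ${}^t\Lambda$ rather than post-composition with $\Lambda$) in order to realize the partial inversion of hard Lefschetz on the correct Künneth factor. No deeper input is required once $B(X)$ is in hand.
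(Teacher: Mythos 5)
Your proof is correct and follows essentially the same route as the paper: use $B(X)$ to make both $\pi_j$ and an inverse to $L^{n-j}$ algebraic, then apply the latter to the first K\"unneth factor of $\pi_j$ (the paper writes this operation as $(Q\times\id)_\ast(\pi_j)$, which agrees with your $\pi_j\circ{}^t\Lambda$). One small inaccuracy: the class of $\Lambda$ need not lie in $H^jX\otimes H^jX$ a priori, but this is harmless, since for degree reasons only that K\"unneth component survives the pairing against $\pi_j\in H^{2n-j}X\otimes H^jX$ in your composition.
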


\begin{proof} As mentioned above, $B(X)$ ensures that $\pi_j$ is algebraic \cite{K0}, \cite{K}. Consider now the isomorphism
  \[   L^{n-j}\times\id\colon\ \ H^jX\otimes H^jX\ \xrightarrow{\cong}\ H^{2n-j}X\otimes H^jX\ \]
  (here we tacitly identify both sides with their images in $H^\ast(X\times X)$).
  
  Since we have $B(X)$, there exists a correspondence, say $Q\in A^{j}(X\times X)$, such that
    \[  (L^{n-j}\times\id)   (Q\times\id)_\ast=\id\colon\ \ H^{2n-j}X\otimes H^jX\ \to\ H^{2n-j}X\otimes H^jX\ .\]
    Since $\pi_j$ is algebraic,
      \[ P_j:= (Q\times\id)_\ast (\pi_j)\in A^j(X\times X)\]
      is still algebraic, and has the requested property.
\end{proof}

\begin{remark} Lemma \ref{kunneth} implies in particular that for a variety satisfying $B(X)$, the K\"unneth component $\pi_j$ is represented by an algebraic cycle contained in $Y_j\times X$, for a dimension $j$ complete intersection $Y_j$. This was also proven in \cite{KMP} (and independently in \cite[proof of theorem 3.1]{yet}, as I wasn't aware of the Kahn--Murre--Pedrini reference at the time).
\end{remark}

\section{Main}

We now prove the main theorem of this note:

\begin{theorem}\label{main} Let $X$ be a smooth projective variety of dimension $n$ over $\C$ that satisfies $B(X)$. Suppose there exists a Zariski open $V\subset X$, and $j=j_1+\cdots+ j_r$ with $j_i\in\NN_{>0}$ such that intersection product induces a surjection
  \[  A^{j_1}V_{}\otimes A^{j_2}V_{}\otimes\cdots\otimes A^{j_r}V_{}\ \to\ A^jV_{}\ .\]
Then cup product induces a surjection
  \[ H^{j_1}(X,\OO_X)\otimes H^{j_2}(X,\OO_X)\otimes\cdots\otimes H^{j_r}(X,\OO_X)\ \to\ H^j(X,\OO_X)\ .\]
\end{theorem}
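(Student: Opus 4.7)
My plan is to adapt the Esnault--Srinivas--Viehweg argument by substituting, for the diagonal $\Delta_X$, the K\"unneth projector $\pi_j$ in the factored form supplied by Lemma \ref{kunneth}. Setting $\tilde P_j := (\tau\times\id)^\ast(P_j) \in A^j(Y\times X)$, with $\tau\colon Y\hookrightarrow X$ the inclusion of a smooth $j$--dimensional complete intersection (smoothness by Bertini), Lemma \ref{kunneth} gives $\pi_j = (\tau\times\id)_\ast(\tilde P_j)$ in $H^{2n}(X\times X)$. A projection formula computation then yields, for any $\eta\in H^j(X,\C)$,
\[ (\pi_j)_\ast(\eta) \ =\ \tilde P_{j,\ast}(\tau^\ast\eta)\ , \]
so that, since $(\pi_j)_\ast$ acts as the identity on $H^j(X,\QQ)$, every $\eta\in H^j(X,\OO_X)$ equals $\tilde P_{j,\ast}(\tau^\ast\eta)$ after projecting to $H^j(X,\OO_X) = H^j(X,\C)/F^1$. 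It therefore suffices to show that the composition $H^j(Y,\OO_Y)\xrightarrow{\tilde P_{j,\ast}} H^j(X,\C)\twoheadrightarrow H^j(X,\OO_X)$ takes values in the image of the cup product map $H^{j_1}(X,\OO_X)\otimes\cdots\otimes H^{j_r}(X,\OO_X) \to H^j(X,\OO_X)$.

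The next step is a Bloch--Srinivas style decomposition of the correspondence $\tilde P_j$ on $Y\times X$ using the Chow-group hypothesis. Following the Esnault--Srinivas--Viehweg strategy for the diagonal in the case $j=n$, I would restrict $\tilde P_j$ to the generic fibre of $Y\times V\to Y$, obtaining a class $\tilde P_j|_{V_{\C(Y)}}\in A^j(V_{\C(Y)})$, decompose it as a sum of $r$--fold intersection products via the hypothesis (applied, after a base-change/colimit argument, over the function field $\C(Y)$), and then spread the decomposition back out. This should yield in $A^j(Y\times X)$ an equality
\[ \tilde P_j \ =\ \sum_k \tilde A_{1,k}\cdot \tilde A_{2,k}\cdots\tilde A_{r,k} \ +\ \delta_D \ +\ \delta_U \]
with $\tilde A_{i,k}\in A^{j_i}(Y\times X)$, with $\delta_D$ supported on $D\times X$ for a proper closed $D\subset Y$ (so $\dim D<j$), and with $\delta_U$ supported on $Y\times U$ where $U:=X\setminus V$.

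The conclusion is then obtained by taking the correspondence action on $\tau^\ast\eta\in H^j(Y,\OO_Y)$ and analysing Hodge types term by term. The contribution $(\delta_U)_\ast(\tau^\ast\eta)$ lies in the image of a Gysin map from $U\subset X$, hence in $N^1H^j(X,\QQ)\subseteq F^1H^j(X,\C)$ (by Deligne), so it vanishes in $H^j(X,\OO_X)=H^j(X,\C)/F^1$. The contribution $(\delta_D)_\ast(\tau^\ast\eta)$ factors, after desingularising $\pi\colon\wt D\to D$, through $H^j(\wt D,\OO_{\wt D})$, which vanishes by Grothendieck vanishing since $\dim\wt D<j$. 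For the main product term, K\"unneth expansion of each $[\tilde A_{i,k}]\in H^{2j_i}(Y\times X)$ together with the algebraicity of $[\tilde A_{i,k}]$ (Hodge type $(j_i,j_i)$) forces the only surviving contribution to $H^{0,j}(X)$ to come from Hodge--K\"unneth components in $H^{j_i,0}(Y)\otimes H^{0,j_i}(X)$: for the $X$--Hodge type of the final product to be $(0,j)$ the individual $X$--Hodge $p$--indices $p_i'$ must all vanish, forcing $a_i=j_i=b_i$ throughout. The resulting contribution to $H^{0,j}(X)$ is a $\C$--linear combination of cup products $\beta_1\cdots\beta_r$ with $\beta_i\in H^{j_i}(X,\OO_X)$, as desired.

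The hard part will be the Bloch--Srinivas decomposition in the second step: one has to pass the $\C$--linear surjectivity on $A^\ast V$ to a decomposition of $\tilde P_j|_{V_{\C(Y)}}$ over the function field, and then spread back out to $A^j(Y\times X)$ while controlling the support of the error terms. This is formally parallel to the diagonal decomposition of Esnault--Srinivas--Viehweg, but must be executed here for the correspondence $\tilde P_j$ on $Y\times X$ rather than for $\Delta_X$ on $X\times X$; the factored form of $\pi_j$ provided by $B(X)$ via Lemma \ref{kunneth} is precisely what makes the bi-degree bookkeeping at the end (and hence the whole argument) work in codimension $j<n$.
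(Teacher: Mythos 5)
Your proposal is correct and follows essentially the same route as the paper: factor $\pi_j$ through a $j$--dimensional complete intersection via Lemma \ref{kunneth}, apply a Bloch--Srinivas decomposition derived from the hypothesis, kill the two error terms for dimension/coniveau reasons, and identify the main term by the Hodge--K\"unneth bookkeeping of Lemma \ref{esv}. The only (harmless) difference is that you decompose $(\tau\times\id)^\ast P_j$ on $Y\times X$ over $\C(Y)$, whereas the paper decomposes $P_j$ on $X\times X$ over $\C(X)$ and only then restricts along $\tau\times\id$; aside from a small notational slip (``$a_i=j_i=b_i$'' should read $a_i=j_i$, $b_i=0$, consistent with the component $H^{j_i,0}(Y)\otimes H^{0,j_i}(X)$ you correctly identify), everything matches.
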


\begin{proof} Since $B(X)$ holds, it follows from lemma \ref{kunneth}
that the K\"unneth component $\pi_j$ can be written
    \[   \pi_j=(\tau\times\id)_\ast(\tau\times\id)^\ast(P_j)\ \ \in H^{2n}(X\times X)\ \]
    for some $P_j\in A^j(X\times X)$,
    where $\tau\colon Y_j\to X$ is the inclusion of a dimension $j$ complete intersection.
    
 Applying the Bloch--Srinivas argument, in the form of proposition \ref{BS} below, to the cycle $P_j\in A^j(X\times X)$, we find a decomposition
   \[   P_j=C_1\cdot \ldots \cdot C_r+\Gamma_1+\Gamma_2\ \ \in A^j(X\times X)\ ,\]
   where $\Gamma_1, \Gamma_2$ are supported on $D\times X$, resp. on $X\times D$, for some divisor $D\subset X$. This induces a decomposition of the K\"unneth component
   \[  \begin{split}\pi_j&=(\tau\times\id)_\ast(\tau\times\id)^\ast(C_1\cdot \ldots \cdot C_r)+
     (\tau\times\id)_\ast(\tau\times\id)^\ast   (\Gamma_1+\Gamma_2)\\
                                 &=(\tau\times\id)_\ast(\tau\times\id)^\ast(C_1\cdot \ldots \cdot C_r)+
        \Gamma_1^\prime+\Gamma_2^\prime      \ \ \in H^{2n}(X\times X)\ ,\end{split}\]
 where $\Gamma_2^\prime$ is still supported on $X\times D$, and $\Gamma_1^\prime$ is supported on $Z\times X$, for some $Z\subset X$ of dimension $j-1$ (indeed, the general complete intersection $Y_j$ will be in general position with respect to $D$; we then define $Z$ to be $D\cap Y_j$).
 
 Now we consider the action of $\pi_j$ on $H^j(X,\OO_X)$. Since $H^j(X,\OO_X)=\gr^0_F H^j(X,\C)$ (where $F$ is the Hodge filtration), $\pi_j$ acts as the identity on $H^j(X,\OO_X)$. On the other hand, it is clear that
   \[ (\Gamma_1^\prime)_\ast H^j(X,\OO_X)=0\]
   (by lemma \ref{comp2}, the action of $\Gamma_1^\prime$ factors over $\gr^0_F H^j(Z,\C)$, which is $0$ for dimension reasons), and also that
   \[ (\Gamma_2^\prime)_\ast H^j(X,\OO_X)=0\]
   (by lemma \ref{comp2}, the action of $\Gamma_2$ factors over $\gr^{-1}_F H^{j-2}(\wt{D},\C)=0$, where $\wt{D}$ is a resolution of singularities of $D$).
   To finish the argument, it only remains to analyze the action
   \[   \bigl((\tau\times\id)_\ast(\tau\times\id)^\ast(C_1\cdot \ldots \cdot C_r)\bigr)_\ast\colon\ \ H^j(X,\OO_X)\ \to\ H^j(X,\OO_X)\ .\]
   
   Using lemmas \ref{comp} and \ref{comp2}, we find an inclusion
   \[   \bigl((\tau\times\id)_\ast(\tau\times\id)^\ast(C_1\cdot \ldots \cdot C_r)\bigr)_\ast H^j(X,\OO_X)     \ \subset\ \bigl(C_1\cdot \ldots \cdot C_r\bigr)_\ast \gr_F^{n-j} H^{2n-j}(X,\C)\ .\]
   
   Using lemma \ref{esv}, we find that
   \[   \bigl(C_1\cdot \ldots \cdot C_r\bigr)_\ast \gr_F^{n-j} H^{2n-j}(X,\C)\subset  \ima\bigl(H^{j_1}(X,\OO_X)\otimes\cdots\otimes H^{j_r}(X,\OO_X)\ \to\ H^j(X,\OO_X)\bigr) \ ,\]
   and so we are done.
  \end{proof}

 \begin{proposition}[Bloch--Srinivas style]\label{BS} Let $X$ be a smooth projective variety of dimension $n$. Suppose there exists a Zariski open $V\subset X$, and $j=j_1+\cdots+ j_r$ with $j_i\in\NN_{>0}$ such that intersection product induces a surjection
  \[  A^{j_1}V_{}\otimes A^{j_2}V_{}\otimes\cdots\otimes A^{j_r}V_{}\ \to\ A^jV_{}\ .\]
  Then for any $a\in A^j(X\times X)$, there exists a decomposition
  \[   a= C_1\cdot\ldots\cdot C_r+\Gamma_1+\Gamma_2\ \ \in A^j(X\times X)\ ,\]
  where $C_i\in A^{j_i}(X\times X)$, and $\Gamma_1, \Gamma_2$ are supported on $D\times X$ (resp. on $X\times D$), for some divisor $D\subset X$.
  \end{proposition}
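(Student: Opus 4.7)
The plan is to mimic the classical Bloch--Srinivas argument, adapted to the multiplicative hypothesis. First, I would restrict $a$ to $V\times X$ and then to the generic fiber of the second projection: setting $K=\C(X)$, this yields a cycle class $a_K\in A^j(V_K)$, where $V_K=V\times_{\C}\mathrm{Spec}(K)$.

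The central step is the following spreading claim: the hypothesis that $A^{j_1}V\otimes\cdots\otimes A^{j_r}V\to A^j V$ is surjective over $\C$ implies the analogous surjectivity over $K$, namely $A^{j_1}V_K\otimes\cdots\otimes A^{j_r}V_K\to A^j V_K$. To prove this I use the description $A^j(V_K)=\varinjlim_U A^j(V\times U)$ as $U$ ranges over non-empty opens of $X$. Given $\gamma\in A^j(V_K)$ represented by $\tilde\gamma\in A^j(V\times U)$, each fiber $\tilde\gamma|_{V\times\{u\}}\in A^jV$ can by hypothesis be written as a finite sum of intersection products of classes in the $A^{j_s}V$. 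A standard Hilbert--scheme / Chow--variety argument --- exploiting that these moduli spaces are countable unions of finite-type schemes, and that a countable constructible cover of $U(\C)$ must, by Baire category in the analytic topology, contain a dense Zariski open --- allows me, after shrinking $U$, to produce cycles $\alpha_{s,k}\in A^{j_s}(V\times U)$ whose intersection product recovers $\tilde\gamma$ on $V\times U$. Passing to the generic fiber then proves the claim.

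Applying this claim to $a_K$ gives $a_K=\sum_k\alpha_{1,k}\cdot\ldots\cdot\alpha_{r,k}$ in $A^j(V_K)$, with $\alpha_{s,k}\in A^{j_s}(V_K)$. Spreading out, these equalities hold already in $A^j(V\times U)$ for some common non-empty open $U\subset X$; I extend the lifted cycles to $X\times X$ by taking Zariski closures of their subvariety representatives, obtaining $C_{s,k}\in A^{j_s}(X\times X)$. By construction the difference $a-\sum_k C_{1,k}\cdot\ldots\cdot C_{r,k}\in A^j(X\times X)$ restricts to zero on $V\times U$, so the localization exact sequence for Chow groups shows that it is represented by a cycle supported on $((X\setminus V)\times X)\cup(X\times(X\setminus U))$. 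Any cycle supported on such a union splits as a sum of cycles supported on each piece; choosing $V$ and $U$ to have divisorial complements at the outset (we may freely shrink $V$ since the hypothesis persists on smaller opens, and likewise enlarge $X\setminus U$ to a divisor containing it), the divisor $D=(X\setminus V)\cup(X\setminus U)$ yields the required $\Gamma_1$ supported on $D\times X$ and $\Gamma_2$ supported on $X\times D$.

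The main obstacle is the spreading step in the central claim: while the Baire / constructibility argument is classical in Bloch--Srinivas-type reasoning, some care is required to pass between cycle classes in the Chow group and actual cycles in the Chow variety, and to bound the number of terms in the decomposition uniformly over an open of $U$. The remaining bookkeeping --- extending cycles by closure, applying the localization sequence, and enlarging complements to divisors --- is essentially formal.
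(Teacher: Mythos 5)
Your overall architecture (restrict to the generic fibre of the second projection, decompose there, spread out over an open $U$, then use localization and the splitting of a cycle supported on $((X\setminus V)\times X)\cup(X\times(X\setminus U))$ to produce $\Gamma_1$ and $\Gamma_2$) is the same as the paper's, and the final bookkeeping is fine. The problem is your central ``spreading claim'' and the Baire/Chow--variety argument you propose for it. The locus of points $u\in U$ over which $\tilde\gamma|_{V\times\{u\}}$ admits a decomposition of bounded complexity is indeed covered by images of countably many finite--type parameter schemes, and one of them must dominate $U$; but that component need not admit a rational section over $U$. It is only generically finite over $U$, so the algebraically varying cycles $\alpha_{s,k}$ live on $V\times P'$ for a generically finite cover $P'\to U'$, i.e.\ the decomposition is obtained over a finite extension $L$ of $K=\C(X)$. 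In the classical Bloch--Srinivas setting one descends from $L$ to $K$ by pushing forward and dividing by the degree, which is harmless because the conclusion there is a \emph{support} condition, preserved by proper pushforward. Here the conclusion is \emph{membership in the image of the intersection product}, and $\mathrm{Nm}_{L/K}$ (equivalently, pushforward along $V_L\to V_K$) does not send a product $\gamma_{1}\cdot\ldots\cdot\gamma_{r}$ to a sum of products: the projection formula only helps when all but one factor is a pullback. So the step ``produce cycles $\alpha_{s,k}\in A^{j_s}(V\times U)$ whose intersection product recovers $\tilde\gamma$'' is exactly the point where the argument breaks, and your closing remark about ``care required'' does not identify this obstruction.

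The paper avoids the issue entirely by never working over $\C(X)$: one chooses a field $k\subset\C$, finitely generated over its prime field, over which $X$, $V$ and $a$ are defined. Then $k(X)$ \emph{embeds into} $\C$ (universal domain), so the image of $a_{restr}\in A^j(X_{k(X)})$ in $A^j(X_{\C})$ can be decomposed directly using the hypothesis, which is given over $\C$. The cycles $c_i$ occurring in that decomposition are defined over a finitely generated extension, so after enlarging $k$ the identity $a_{restr}=c_1\cdot\ldots\cdot c_r+a_0$ involves only classes defined over $k(X)$, and the injectivity of $A^j(X_{k(X)})\to A^j(X_{\C})$ (with $\QQ$--coefficients) descends the identity to $A^j(X_{k(X)})=\varinjlim A^j(X\times U)$. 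No parameter spaces, no covers, and no norm maps applied to products. If you want to keep your formulation of the intermediate claim (surjectivity over $\C(X)$), it is in fact true, but the correct proof is again via a finitely generated field of definition of the given class and the embedding $k(X)\hookrightarrow\C$, not via the Baire category argument.
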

  
  \begin{proof} To be sure, this is a variant of the argument of \cite{BS}, exploiting the fact that $\C$ is a universal domain.
  Let $D_1\subset X$ denote the complement of $V$. Taking the smallest possible field of definition, we can suppose everything ($X$, $V$ and the cycle $a$) is defined over a field $k\subset\C$ which is finitely generated over its prime subfield. Then the inclusion $k(X)\subset\C$ (which comes from $\C$ being a universal domain) induces an injection
  \[  A^j(X_{k(X)})\ \to\ A^j(X_{\C})\ \]
  \cite[Appendix to Lecture 1]{B}. On the other hand,
  \[ A^j(X_{k(X)})_{}=   \varinjlim A^j(X\times U)_{}\ ,\]
 where the limit is taken over opens $U\subset X$ \cite[Appendix to Lecture 1]{B}.   
 
 Given the cycle $a\in A^j(X\times X)$, consider the restriction
   \[ a_{restr}\ \ \in A^j(X_{k(X)})\ .\]
   The assumption implies there exist cycles $c_i\in A^{j_i}(X_{\C})$ such that
   \[  a_{restr}=c_1\cdot\ldots\cdot c_r+a_0\ \ \in A^j(X_{\C})\ ,\]
   where $a_0$ is supported on $D_1$. Now, we extend $k$ so that the cycles $c_i$ are also defined over $k$ (and $k$ is still finitely generated over its prime subfield, so that $k(X)\subset\C$). Then using the injection $A^j(X_{k(X)})\ \to\ A^j(X_{\C})$ cited above, we obtain the decomposition
   \[  a_{restr}=c_1\cdot\ldots\cdot c_r+a_0\ \ \in A^j(X_{k(X)})\ .\]  
   Let $C_i\in A^{j_i}(X\times X)$ be any cycle restricting to $c_i$, and let $\Gamma_1$ be any cycle restricting to $a_0$. Then using the limit property cited above, we find that the difference
   \[ a-C_1\cdot\ldots\cdot C_r-\Gamma_1   \ \ \in A^j(X\times X)\]
   restricts to $0$ in $A^j(X\times U)$, for some open $U\subset X$. This means there exists a divisor $D_2\subset X$ and a cycle $\Gamma_2$ supported on $D_2$ such that
   \[   a= C_1\cdot\ldots\cdot C_r+\Gamma_1+\Gamma_2\ \ \in A^j(X\times X)\ .\]
   Taking $D$ a divisor containing both $D_1$ and $D_2$, this proves the proposition.
   \end{proof}
  
  \begin{lemma}\label{comp} Let $f\colon Y\to X$ be a proper morphism of smooth projective varieties, where $\dim X=n$ and $\dim Y=m$. Let $C\in A^j(X\times X)$.
  Then
    \[  \bigl( (f\times\id)^\ast C\bigr)_\ast = C_\ast f_\ast\colon\ \ H^iY\ \to\ H^{i+2(j-m)}X\ .\]
     \end{lemma}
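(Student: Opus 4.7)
The plan is to prove the identity by unfolding the definition of the action of a correspondence and applying the projection formula together with base change to the cartesian square
\[
\begin{array}{ccc}
Y \times X & \xrightarrow{f \times \id} & X \times X \\
q_1 \downarrow & & \downarrow p_1 \\
Y & \xrightarrow{\ \ f\ \ } & X
\end{array}
\]
where $p_i$ and $q_i$ denote the projections from $X\times X$, resp.\ $Y\times X$, onto the $i$-th factor. By the definition of the correspondence action, for $\alpha\in H^iY$ one has
\[
\bigl((f\times\id)^\ast C\bigr)_\ast\alpha \ =\ (q_2)_\ast\bigl(q_1^\ast\alpha \cdot (f\times\id)^\ast C\bigr),\qquad C_\ast f_\ast\alpha \ =\ (p_2)_\ast\bigl(p_1^\ast f_\ast\alpha\cdot C\bigr),
\]
so the task is to identify these two expressions.

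Starting from $C_\ast f_\ast\alpha$, I would first rewrite $p_1^\ast f_\ast\alpha$ using base change for the cartesian square above, namely $p_1^\ast f_\ast = (f\times\id)_\ast q_1^\ast$. In this setting this identity is essentially the K\"unneth formula: $p_1^\ast\beta = \beta\times 1$, $q_1^\ast\alpha = \alpha\times 1$, and $(f\times\id)_\ast(\alpha\times 1) = f_\ast\alpha\times 1$. Next I would apply the projection formula for the proper morphism $f\times\id$ to get
\[
(f\times\id)_\ast q_1^\ast\alpha\cdot C \ =\ (f\times\id)_\ast\bigl(q_1^\ast\alpha\cdot(f\times\id)^\ast C\bigr).
\]
Pushing forward by $p_2$ and using $p_2\circ(f\times\id)=q_2$, so that $(p_2)_\ast(f\times\id)_\ast = (q_2)_\ast$ by functoriality of proper pushforward, then immediately produces the right-hand side $\bigl((f\times\id)^\ast C\bigr)_\ast\alpha$.

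I do not expect a genuine obstacle: the statement is a formal naturality of the action of a correspondence under pullback in the first factor, and both base change and the projection formula are standard for proper morphisms between smooth projective varieties. The only bookkeeping worth checking is that the degrees match on both sides: $(f\times\id)^\ast C$ has codimension $j$ on $Y\times X$ (of dimension $m+n$), so its action shifts cohomological degree by $2j-2m=2(j-m)$, and this equals $2(j-n)+2(n-m)$, which is exactly the shift incurred by $C_\ast\circ f_\ast$.
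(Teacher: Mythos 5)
Your proposal is correct and follows essentially the same route as the paper's proof: the same three ingredients (the base-change identity $p_1^\ast f_\ast=(f\times\id)_\ast q_1^\ast$, the projection formula for $f\times\id$, and $(p_2)_\ast(f\times\id)_\ast=(q_2)_\ast$), merely read in the opposite direction, starting from $C_\ast f_\ast$ rather than from $\bigl((f\times\id)^\ast C\bigr)_\ast$. The degree check at the end is also accurate.
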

  
  \begin{proof} This is purely formal, and surely well--known. Let $p_1, p_2\colon X\times X\to X$ denote projection on the first (resp. second) factor. Let $q_1, q_2$ denote projections from $Y\times X$ to $Y$ (resp. to $X$).
  For $a\in H^iY$, we have
    \[\begin{split} \bigl( (f\times\id)^\ast C\bigr)_\ast(a)&=   (q_2)_\ast \bigl( (q_1)^\ast(a)\cdot (f\times\id)^\ast (C)\bigr)\\
                    &= (p_2)_\ast \bigl( (f\times\id)_\ast ((q_1)^\ast(a)\cdot (f\times\id)^\ast(C))  \bigr) \\
                    &=(p_2)_\ast \bigl( (f\times\id)_\ast (q_1)^\ast(a)\cdot C\bigr)\\
                    &=(p_2)_\ast \bigl( (p_1)^\ast f_\ast (a)\cdot C\bigr)=: C_\ast f_\ast(a)\ \ \in H^{i+2(j-m)}X\ .\\
                    \end{split}\]
    \end{proof}

\begin{lemma}\label{comp2} Let $f\colon Y\to X$ be as in lemma \ref{comp}. Let $D\in A^j(Y\times X)$. Then
  \[   \bigl( (f\times\id)_\ast D\bigr)_\ast = D_\ast f^\ast\colon\ \ H^iX\ \to\ H^{i+2(j-m)}X\ .\]
  \end{lemma}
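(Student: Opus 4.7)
My plan is to mimic the computation carried out in the proof of Lemma \ref{comp}, but dualize it by exchanging the roles of proper pushforward and pullback along $f \times \id$, using the projection formula in place of plain functoriality of cup/intersection product. The statement is a purely formal identity and should follow from the standard compatibilities of the six operations on cycles/cohomology.

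Concretely, I would begin by fixing notation: let $p_1, p_2 \colon X \times X \to X$ be the two projections and $q_1 \colon Y \times X \to Y$, $q_2 \colon Y \times X \to X$ be the projections from $Y \times X$. The key geometric input is the (evidently cartesian) diagram whose commutativity gives
\[
p_1 \circ (f \times \id) = f \circ q_1, \qquad p_2 \circ (f \times \id) = q_2,
\]
so that $(f \times \id)^* p_1^* = q_1^* f^*$ and $(p_2)_* (f \times \id)_* = (q_2)_*$.

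With this in hand, the calculation for $a \in H^i X$ runs as follows. Starting from the definition of the correspondence action of $(f \times \id)_* D \in A^{j+n-m}(X\times X)$,
\[
\bigl((f \times \id)_* D\bigr)_*(a) = (p_2)_* \Bigl( p_1^* a \cdot (f \times \id)_* D \Bigr),
\]
I would apply the projection formula for the proper morphism $f \times \id$ to pull $p_1^* a$ across the pushforward, producing
\[
(p_2)_* (f \times \id)_* \Bigl( (f \times \id)^* p_1^* a \cdot D \Bigr).
\]
Rewriting using the two compatibilities above, this becomes
\[
(q_2)_* \bigl( q_1^* f^* a \cdot D \bigr) = D_*\bigl( f^*(a)\bigr),
\]
which is exactly the desired identity.

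The only potentially delicate point is justifying the projection formula step, but this is a standard fact for proper maps of smooth varieties in the $\QQ$--cohomology setting (and in fact can be imported wholesale from the reference \cite{F} cited in the conventions), so I do not expect any real obstacle; the lemma is essentially a bookkeeping exercise once the cartesian diagram is in place.
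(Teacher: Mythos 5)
Your proposal is correct and follows essentially the same route as the paper: both start from the definition of the correspondence action, apply the projection formula for the proper map $f\times\id$ to move $p_1^\ast a$ across the pushforward, and then use the compatibilities $(f\times\id)^\ast p_1^\ast = q_1^\ast f^\ast$ and $(p_2)_\ast (f\times\id)_\ast = (q_2)_\ast$ to identify the result with $D_\ast f^\ast(a)$. Nothing is missing.
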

  
  \begin{proof} Just as lemma \ref{comp}, this is surely well--known. Keeping the notation of lemma \ref{comp}, for $b\in H^iX$ we have
    \[ \begin{split} \bigl( (f\times\id)_\ast D\bigr)_\ast (b)  &= (p_2)_\ast \bigl( (p_1)^\ast(b)\cdot (f\times\id)_\ast (D)\bigr)\\
                                                      &= (p_2)_\ast \bigl(   (f\times\id)_\ast \bigl( (f\times\id)^\ast (p_1)^\ast(b)\cdot D\bigr)\bigr)\\
                                                      &=(p_2)_\ast (f\times\id)_\ast \bigl(  (q_1)^\ast f^\ast(b)\cdot D\bigr)\\
                                                      &= (q_2)_\ast \bigl(  (q_1)^\ast f^\ast(b)\cdot D\bigr)=: D_\ast f^\ast(b)\ \ \in H^{i+2(j-m)}X\ .  \\
                                                         \end{split}\]
  \end{proof}

  \begin{lemma}[Esnault--Srinivas--Viehweg \cite{ESV}]\label{esv} Let $X$ be a smooth projective variety of dimension $n$. Let $C_i\in A^{j_i}(X\times X), i=1,\ldots,r$, with $j=j_1+\cdots+j_r$. Then
    \[   (C_1\cdot\ldots\cdot C_r)_\ast \gr_F^{n-j}H^{2n-j}(X,\C)\subset\ima\Bigl( H^{j_1}(X,\OO_X)\otimes\cdots\otimes H^{j_r}(X,\OO_X)\ \to\ H^j(X,\OO_X)\Bigr)\ .\]  
  \end{lemma}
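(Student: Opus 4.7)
The plan is to trace through the Hodge--K\"unneth decomposition of $C_1 \cdot \ldots \cdot C_r$ over $\C$ and identify precisely which K\"unneth component contributes to the induced map $\gr_F^{n-j} H^{2n-j}(X,\C) \to \gr_F^0 H^j(X,\C) \cong H^j(X,\OO_X)$. That the action even makes sense on graded pieces is because $C_1 \cdot \ldots \cdot C_r$ is pure Hodge of type $(j,j)$, so $(C_1 \cdot \ldots \cdot C_r)_*$ shifts the Hodge filtration by $j-n$, mapping $F^{n-j} H^{2n-j}$ into $F^0 H^j$ and $F^{n-j+1} H^{2n-j}$ into $F^1 H^j$.

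First I would recall that for $C \in H^{2j}(X \times X, \C)$ with K\"unneth decomposition $C = \sum_k a_k \otimes b_k$, the action on $\alpha \in H^{2n-j}(X,\C)$ is, up to Koszul signs, $C_* \alpha = \sum_k \bigl(\int_X a_k \cup \alpha\bigr) \cdot b_k$. Refining to Hodge type: for $\alpha$ representing a class in $\gr_F^{n-j} H^{2n-j}$ by its $(n-j,n)$ component, the integral is nonzero only when $a_k \in H^{j,0}(X)$, which forces $b_k \in H^{0,j}(X) = H^j(X,\OO_X)$ (since $C_1 \cdot \ldots \cdot C_r$ is of total Hodge type $(j,j)$). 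Thus only the $H^{j,0}(X) \otimes H^{0,j}(X)$ K\"unneth component of $C_1 \cdot \ldots \cdot C_r$ is relevant modulo $F^1 H^j$.

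Second, I would compute that distinguished component by expanding the product. Intersection on $X \times X$ is cup product, and in Hodge--K\"unneth the summand landing in $H^{j,0}(X) \otimes H^{0,j}(X)$ comes from tuples of Hodge types $(p_i, q_i; r_i, s_i)$ with $p_i + r_i = q_i + s_i = j_i$ satisfying $\sum p_i = j$, $\sum q_i = 0$, $\sum r_i = 0$, $\sum s_i = j$. Non--negativity then forces $q_i = r_i = 0$ and $p_i = s_i = j_i$ for every $i$, so the only contribution is the product of the $H^{j_i,0} \otimes H^{0,j_i}$ components of the individual $C_i$. Denoting these pieces by $c_i \otimes d_i$ with $c_i \in H^0(X,\Omega^{j_i})$ and $d_i \in H^{j_i}(X,\OO_X)$, the $H^{j,0} \otimes H^{0,j}$ part of $C_1 \cdot \ldots \cdot C_r$ equals, up to a Koszul sign, $(c_1 \cdots c_r) \otimes (d_1 \cdots d_r)$.

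Plugging back into the action formula gives
\[ (C_1 \cdot \ldots \cdot C_r)_* \alpha \equiv \pm \Bigl(\int_X (c_1 \cdots c_r) \cup \alpha\Bigr) \cdot (d_1 \cdot \ldots \cdot d_r) \pmod{F^1 H^j(X,\C)}, \]
visibly a scalar multiple of the cup product $d_1 \cdot \ldots \cdot d_r$, which lies in the image of $H^{j_1}(X,\OO_X) \otimes \cdots \otimes H^{j_r}(X,\OO_X) \to H^j(X,\OO_X)$, as wanted. The main obstacle is purely bookkeeping --- managing Koszul signs in the K\"unneth multiplication, and confirming compatibility of the Hodge filtration shifts --- so no conceptual difficulty is expected.
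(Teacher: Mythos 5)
Your proposal is correct and follows essentially the same route as the paper's proof: both arguments reduce the action on $\gr_F^{n-j}H^{2n-j}(X,\C)\to\gr_F^0H^j(X,\C)$ to the $H^{j,0}(X)\otimes H^{0,j}(X)$ Hodge--K\"unneth component of $C_1\cdot\ldots\cdot C_r$ (the paper's map $\iota$), show by type bookkeeping that this component is the product of the corresponding components of the $C_i$, and use the one--dimensionality of $H^{2n}(X,\C)$ to exhibit the output as a linear combination of cup products $d_1\cdot\ldots\cdot d_r$ with $d_i\in H^{j_i}(X,\OO_X)$. No gaps.
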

  
  \begin{proof} This is shamelessly plagiarized from \cite{ESV}, who prove the $j=n$ case. Let
  $ C\in A^j(X\times X)$
  be any correspondence.
    The crucial observation is that the action
    \[  C_\ast\colon\ \ \gr_F^{n-j}H^{2n-j}(X,\C)\ \to\ \gr_F^0 H^j(X,\C)\]
  only depends on the image of $C$ under the composite map
  \[ \iota\colon\ \   A^j(X\times X)\ \to\  H^{2j}(X\times X,\C)\ \to\ H^jX\otimes H^jX\ \to\ \gr_F^j H^jX\otimes \gr_F^0 H^jX\ \]
  (Here the second map is given by the K\"unneth decomposition, and the last map is induced by projection on the appropriate summands of the Hodge decomposition).
  Indeed, suppose $C\in A^j(X\times X)$ is such that $\iota(C)=0$, i.e. the K\"unneth part of type $H^jX\otimes H^jX$ of $C$ is contained in 
     \[ \bigoplus_{i<j}  \gr_F^i H^jX\otimes \gr_F^{j-i} H^jX\ \subset\  \gr_F^j (H^jX\otimes H^jX)\ .\]
     Then, for $a\in \gr_F^{n-j}H^{2n-j}(X,\C)$ we find that 
     \[  (p_1)^\ast(a)\cdot C\ \ \in \bigoplus_{i<j} \gr_F^{n-j+i} H^{2n}X\otimes \gr_F^{j-i}H^jX=0\ ,\]
     and hence
     \[  C_\ast (a)=0\ \ \in H^j(X,\C)\ .\]
   Next, we apply this observation to
     \[ C=C_1\cdot\ldots\cdot C_r\ \ \in A^j(X\times X)\ ,\]
     with $C_i\in A^{j_i}X$.     
      The Hodge decomposition then gives that
     \[  \begin{split}\iota(C)=\iota(C_1)\cdot\ldots\cdot\iota(C_r)\in\ &\ima\bigl(\gr_F^{j_1} H^{j_1}X\otimes\cdots\otimes \gr_F^{j_r} H^{j_r}X\bigr)\otimes  \ima\bigl(\gr_F^{0} H^{j_1}X\otimes\cdots\otimes \gr_F^{0} H^{j_r}X\bigr)\\
                          &  \ \subset\    \gr_F^j H^jX\otimes \gr_F^0 H^jX\ .\\
                          \end{split}\]
     This proves the lemma: suppose
     \[ \iota(C)=\sum_k C^k_{left}\otimes C^k_{right}\ \ \in \gr_F^j H^jX\otimes \gr_F^0 H^jX\ .\]
     Then reasoning as above, we find that
     \[  (\iota(C))_\ast(a)= \sum_k (p_2)_\ast \bigl( (a\cup C^k_{left})\otimes C^k_{right} \bigr)=\sum_k \alpha_k C^k_{right}\ \ \in \gr_F^0 H^jX\ ,\]
     where the $\alpha_k$ are complex numbers (this is because $H^{2n}X$ is one--dimensional and generated by the class of a point). 
     \end{proof}

\begin{remark} It is mainly the contrapositive of theorem \ref{main} that is useful (this is another remark made in \cite{ESV} for their theorem). Indeed, suppose $X$ and $j=j_1+\ldots+j_r$ are such that
  \[   H^{j_1}(X,\OO_X)\otimes H^{j_2}(X,\OO_X)\otimes\cdots\otimes H^{j_r}(X,\OO_X)\ \to\ H^j(X,\OO_X)\ \]
is {\em not\/} surjective (for example, because
  \[ \prod_{i=1}^r \dim H^{j_i}(X,\OO_X)< \dim H^j(X,\OO_X)\ ).\]  
  Then by theorem \ref{main}, likewise
  \[     A^{j_1}X_{}\otimes \cdots\otimes A^{j_r}X_{}\ \to\ A^jX_{}\ \]
  fails to be surjective (and the same holds for any open $V\subset X$).
  \end{remark}

\begin{acknowledgements}
This note is the fruit of the Strasbourg 2014---2015 ``groupe de travail'' based on the monograph \cite{Vo}. I want to thank all the participants of this groupe de travail for the very pleasant and stimulating atmosphere, and their interesting lectures. Furthermore, many thanks to Yasuyo, Kai and Len for providing a wonderful working environment at home in Schiltigheim. 
Special thanks to Kai for all the bicycle trips made together this summer.
\end{acknowledgements}



\end{document}